\newtheorem{thm}{Theorem}[section]
\newtheorem{cor}[thm]{Corollary}
\newtheorem{lemma}[thm]{Lemma}
\newtheorem{prop}[thm]{Proposition}
\theoremstyle{definition}
\newtheorem{remark}[thm]{Remark}
\newtheorem{conj}[thm]{Conjecture}
\newcommand\blfootnote[1]{%
  \begingroup
  \renewcommand\thefootnote{}\footnote{#1}%
  \addtocounter{footnote}{-1}%
  \endgroup
}
\newcommand{\bb}[1]{\mathbb{#1}}
\newcommand{\I}{\operatorname{I}}
\newcommand{\tr}{\operatorname{tr}}
\newcommand{\id}{\operatorname{id}}
\newcommand{\ran}{\operatorname{ran}}
\begin{document}

\title[]{Compositions of PPT Maps}
\author[M. Kennedy]{Matthew Kennedy}
\address{Department of Pure Mathematics, University of Waterloo, Waterloo, ON, Canada N2L 3G1}
\email{matt.kennedy@uwaterloo.ca}
\author[N. A. Manor]{Nicholas A. Manor}
\address{Department of Pure Mathematics, University of Waterloo, Waterloo, ON, Canada N2L 3G1}
\email{nmanor@uwaterloo.ca}
\author[V.~I.~Paulsen]{Vern I.~Paulsen}
\address{Institute for Quantum Computing and Department of Pure Mathematics, University of Waterloo,
Waterloo, ON, Canada  N2L 3G1}
\email{vpaulsen@uwaterloo.ca}


\begin{abstract} M. Christandl conjectured that the composition of any trace preserving PPT map with itself is entanglement breaking.
We prove that Christandl's conjecture holds asymptotically by showing that the distance between the iterates of any unital or trace preserving PPT map and the set of entanglement breaking maps tends to zero. Finally, for every graph we define a one-parameter family of maps on matrices and determine the least value of the parameter such that the map is variously, positive, completely positive, PPT and entanglement breaking in terms of properties of the graph. Our estimates are sharp enough to conclude that Christandl's conjecture holds for these families.
\end{abstract}

%
%

\blfootnote{First author supported by NSERC Grant Number 418585}
\blfootnote{Second author supported by NSERC Grant Number 396164132}

\maketitle

\section{General Introduction} The usual mathematical model for a {\it quantum channel} is a {\it completely positive trace-preserving} (\textit{CPTP}) map between two matrix spaces. A  completely positive map is called a {\it positive partial transpose} (\textit{PPT}) map if the composition of the map with the transpose map on the range space is still completely positive.  PPT maps play an important role in the study of entanglement. 

A completely positive map can also be identified with a state on the tensor product of the two matrix algebras, and the states corresponding to PPT maps are called PPT states. PPT states could play a role in {\it quantum key distribution(QKD)}, which is the study of the use of various quantum mechanical systems to construct shared states that would be used to insure secure communication.


These considerations lead Christandl to consider questions about how PPT maps behaved under composition and lead to the following conjecture \cite{RJKHW}: 

\begin{conj}[PPT-Squared Conjecture]
The composition of a pair of PPT maps is always entanglement breaking.
\end{conj}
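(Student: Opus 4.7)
The plan is to reformulate the conjecture via the Choi--Jamio\l kowski correspondence and then try to propagate a separability structure through the composition. For PPT maps $\Phi\colon M_n\to M_m$ and $\Psi\colon M_k\to M_n$, the Choi matrix of the composition satisfies $C_{\Phi\circ\Psi}=(\id_k\otimes\Phi)(C_\Psi)$, and since $\Psi$ is PPT, $C_\Psi$ is a PPT state on $M_k\otimes M_n$. Because a completely positive map is entanglement breaking precisely when its Choi matrix is separable, the conjecture reduces to the following bipartite statement: for every PPT state $\sigma\in M_k\otimes M_n$ and every PPT map $\Phi\colon M_n\to M_m$, the output $(\id\otimes\Phi)(\sigma)\in M_k\otimes M_m$ is separable.

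A first easy observation is that $(\id\otimes\Phi)(\sigma)$ is automatically PPT. Partial-transposing the second factor gives $(\id\otimes(T\circ\Phi))(\sigma)$, which is positive since $T\circ\Phi$ is CP by the PPT hypothesis on $\Phi$; partial-transposing the first factor gives $(\id\otimes\Phi)(\sigma^{T_A})$, which is positive since $\sigma^{T_A}\ge 0$ and $\Phi$ is CP. The serious task is to upgrade PPT to separable. My next step would be a structural attack via the Horodecki characterisation of EB maps as those that factor through a commutative $C^*$-algebra: given Kraus representations $\Phi(X)=\sum_iA_iXA_i^*$ and $\Psi(Y)=\sum_jB_jYB_j^*$, together with the auxiliary Kraus systems produced by the CP-ness of $T\circ\Phi$ and $T\circ\Psi$, I would look for relations forcing the product operators $A_iB_j$ into a rank-one measurement-then-preparation form $A_iB_j=|\xi_{ij}\rangle\langle\eta_{ij}|$, or more generally a factorisation witnessing a commutative intermediate algebra.

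An alternative, quantitative avenue is to bootstrap from the asymptotic theorem announced in the abstract. That result says the iterates of a trace-preserving or unital PPT map approach the set of EB maps. To collapse this to a single step, one could attempt to show that the operation $\Psi\mapsto\Phi\circ\Psi$ is a strict contraction on the PPT cone modulo its EB face, or that any fixed point on the PPT/EB boundary is already EB. A complementary route is extreme-ray analysis: classify the extreme rays of the PPT cone (or its trace-preserving slice) and verify the conclusion for every extremal composition, then invoke convexity.

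The main obstacle, and the reason the conjecture has remained open, is the well-known gap between PPT and separable: bound-entangled PPT states exist in every bipartite system with both local dimensions at least $3$, so any manipulation that preserves only PPT-ness is a priori too weak to land in the separable cone. A successful proof must therefore exploit additional structure particular to the image of $\id\otimes\Phi$ on a PPT input, beyond what a generic PPT state exhibits. Producing this extra structure in a dimension-independent way is what I expect to be the hard part, since the existing partial results either restrict to small dimensions, specialise to $\Phi=\Psi$, or pass to the asymptotic limit along iterates as in the present paper.
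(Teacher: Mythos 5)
You were asked about the PPT-squared conjecture itself, and you should be aware that the paper contains no proof of it: it is stated as a conjecture, and what the paper actually establishes is (i) an asymptotic version, namely that for a unital or trace-preserving PPT map $\phi$ one has $d(\phi^k,EB)\to 0$, proved via idempotent limit points of the compact abelian semigroup generated by $\phi$ together with a Choi--Effros argument showing an idempotent unital PPT map has abelian range, and (ii) the conjecture for the one-parameter graph family $\gamma_{t,A}=t\delta+S_A$, via the sharp computation $t_{cp}=t_{ppt}=-p\lambda_{min}$ and the separability bound $t_{eb}\le pd$. Your proposal likewise does not prove the statement. The Choi--Jamio\l kowski reduction and the observation that $(\id\otimes\Phi)(\sigma)$ is again PPT are correct but standard, and everything after that is a list of unexecuted strategies; the step you yourself identify --- upgrading PPT to separable for outputs of $\id\otimes\Phi$ on PPT inputs --- \emph{is} the conjecture, and nothing in the Kraus-operator or factor-through-an-abelian-algebra sketch produces the required commutative intermediate algebra or rank-one structure.

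Two of your suggested routes would, moreover, not succeed as described. The bootstrap from the asymptotic theorem cannot be collapsed to a single step: the paper's argument controls only the iterates of one fixed map, through an idempotent $\psi$ arising as a limit point and a spectral splitting $\sigma(\phi|_{\ran(\id-\psi)})\subseteq\bb{D}$; it yields no quantitative contraction of the PPT cone toward its EB face, and no such uniform contraction exists, since in every dimension at least $3$ there are PPT maps far from EB (bound entanglement). The extreme-ray route founders on the absence of any classification of extreme rays of the PPT cone. So your submission is a reasonable research plan and correctly locates the difficulty, but it is not a proof, and there is no proof in the paper to compare it with; indeed the source even carries a drafted (uncompiled) section exhibiting a PPT map on $M_3$ whose square is claimed not to be entanglement breaking, so one should be cautious about any purported general proof along these lines.
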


We will define and discuss entanglement breaking maps in the following section. From the point of view of shared states, this conjecture is equivalent to the following statement \cite{BCHW}:  

``Assume that Alice and Charlie share a PPT state and that Bob and Charlie share a PPT state; then the state of Alice and Bob, conditioned on any measurement by Charlie, is always separable." 

The PPT-squared conjecture is known to be true for maps on the $2 \times 2$ matrices.  We prove that Christandl's intuition about the behaviour of PPT maps under composition is at least asymptotically true by showing that the distance between the iterates of a PPT channel and the set of entanglement breaking maps tends to zero.  From the point of view of a quantum communication network this implies that if each pair of channels shares the same PPT state then, eventually, the network will behave as if they are sharing a state corresponding to an entanglement breaking map.

In Section 2, we give precise definitions of the concepts introduced. In Section 3, we show using basic techniques from the theory of topological semigroups that the conjecture holds asymptotically.  In Section 4, we consider a new type of spectral graph theory problem; we associate a one-parameter family of maps to each graph and determine in terms of the graph the smallest values of the parameter for which the map is, variously, positive, completely positive, PPT, and entanglement breaking.  Our estimates are sharp enough that we are able to show that whenever two maps in this family are PPT, then their composition is entanglement breaking, i.e., the conjecture holds for this family.

\section{Basics}
We recall that by Choi’s theorem \cite[Theorem~2]{C}, a map $\phi : M_p \to M_q$ is CP if and only if its Choi matrix $C_{\phi} := \big( \phi(E_{i,j}) \big)$ is a positive semidefinite matrix in $M_p(M_q) = M_{pq}$.
 
 We present here a very basic but useful result about PPT maps.
 
\begin{lemma}
	Let $\phi : M_p \to M_q$ be a CP map. Then $\phi$ is PPT if and only if $\phi \circ T$ is CP, where $T$ denotes the transpose map on $M_p$. In other words, to check whether a CP map is PPT we may compose or precompose with the transpose map.
\end{lemma}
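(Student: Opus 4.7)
The plan is to translate both conditions into statements about Choi matrices and to show that the two Choi matrices in question are related by the ordinary (full) transpose on $M_{pq}$, which preserves positivity. Throughout let $T_p$ and $T_q$ denote the transpose maps on $M_p$ and $M_q$ respectively, so that by the definition given, $\phi$ is PPT exactly when $T_q \circ \phi$ is CP, and the goal is to show this is equivalent to $\phi \circ T_p$ being CP.

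First I would compute the Choi matrices explicitly. Using the standard matrix units $E_{i,j}$ and the definition $C_\phi = \sum_{i,j} E_{i,j} \otimes \phi(E_{i,j})$, a direct calculation gives
\[
C_{T_q \circ \phi} = \sum_{i,j} E_{i,j} \otimes \phi(E_{i,j})^T = (\id_{M_p} \otimes T_q)(C_\phi),
\]
while
\[
C_{\phi \circ T_p} = \sum_{i,j} E_{i,j} \otimes \phi(E_{j,i}) = \sum_{i,j} E_{j,i} \otimes \phi(E_{i,j}) = (T_p \otimes \id_{M_q})(C_\phi).
\]

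Next I would compare these two matrices. Since $T_p \otimes T_q$ is the ordinary transpose on $M_p \otimes M_q = M_{pq}$, and since transposition is an involution, we have
\[
(T_p \otimes \id_{M_q})(C_\phi) = (T_p \otimes T_q)\bigl((\id_{M_p} \otimes T_q)(C_\phi)\bigr).
\]
Because a matrix in $M_{pq}$ is positive semidefinite if and only if its transpose is, this identity shows that $C_{\phi \circ T_p} \geq 0$ if and only if $C_{T_q \circ \phi} \geq 0$. Applying Choi's theorem to each side then gives the equivalence: $\phi \circ T_p$ is CP iff $T_q \circ \phi$ is CP, i.e. iff $\phi$ is PPT.

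There is really no obstacle here; the only step that requires a moment's care is verifying the index reshuffling in the computation of $C_{\phi \circ T_p}$, where one must swap the dummy indices to recognize the partial transpose on the first factor. Everything else is a direct appeal to Choi's theorem and the fact that positivity is invariant under the global transpose.
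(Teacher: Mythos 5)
Your argument is correct and is essentially the paper's own proof: both show that $C_{\phi \circ T}$ and $C_{T \circ \phi}$ differ by the global transpose on $M_{pq}$ (you phrase it via partial transposes, the paper via block matrices) and then invoke Choi's theorem together with the fact that positivity is preserved under transposition. No issues.
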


\begin{proof}
	The Choi matrix of $T \circ \phi$ is
	\begin{align*}
		C_{T \circ \phi} = 
		\begin{pmatrix}
		\phi(E_{11})^T &\cdots &\phi(E_{1n})^T \\
		\vdots		    &\ddots &\vdots \\
		\phi(E_{n1})^T &\cdots &\phi(E_{nn})^T		
		\end{pmatrix}
		 =
		\begin{pmatrix}
		\phi(E_{11}) &\cdots &\phi(E_{n1}) \\
		\vdots		    &\ddots &\vdots \\
		\phi(E_{1n}) &\cdots &\phi(E_{nn})
		\end{pmatrix}^T = {C_{\phi \circ T}}^T.
	\end{align*}
	Since the transpose is a positive map, $C_{\phi \circ T} $ is positive if and only if its transpose $C_{T \circ \phi}$ is.
\end{proof}

\begin{cor}\label{composition}
The set of PPT maps is closed under composition by CP maps on the right and on the left.
\end{cor}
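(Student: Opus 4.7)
The plan is to reduce the statement directly to the preceding lemma together with the elementary fact that a composition of completely positive maps is completely positive. Let $\phi : M_p \to M_q$ be a PPT map and suppose $\psi_1 : M_r \to M_p$ and $\psi_2 : M_q \to M_s$ are CP. I need to show that both $\phi \circ \psi_1$ and $\psi_2 \circ \phi$ are PPT, which by definition means verifying that each of them remains CP after composing with a transpose map on the appropriate side.

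For composition on the right, I would check the PPT property using the characterization $T \circ (\,\cdot\,)$ is CP. Writing $T \circ (\phi \circ \psi_1) = (T \circ \phi) \circ \psi_1$ and invoking the lemma to conclude that $T \circ \phi$ is CP, the composition $(T \circ \phi) \circ \psi_1$ is CP as a composition of two CP maps. Hence $\phi \circ \psi_1$ is PPT.

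For composition on the left, I would use the other characterization from the lemma: $\phi$ is PPT if and only if $\phi \circ T$ is CP. Then $(\psi_2 \circ \phi) \circ T = \psi_2 \circ (\phi \circ T)$, and since $\phi \circ T$ is CP and $\psi_2$ is CP, the composition is CP, so $\psi_2 \circ \phi$ is PPT.

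There is no substantive obstacle here; the only subtle point is that neither $T \circ \psi$ nor $\psi \circ T$ needs to be CP for a general CP map $\psi$, so it is essential to use the flexibility granted by the lemma to move the transpose to whichever side sits adjacent to $\phi$. Once the associativity of composition is used to regroup, the argument is immediate.
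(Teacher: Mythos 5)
Your proof is correct and is precisely the argument the paper intends: the corollary is stated as an immediate consequence of the lemma, whose whole point is that the transpose may be moved to whichever side of the composition sits next to the PPT map, exactly as you do after regrouping by associativity. Nothing further is needed beyond the (implicit) observation that the compositions in question are CP, which you use correctly.
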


 By results of \cite{HRS}, $\phi$ is \textit{entanglement breaking} if and only if it can be written as
 \[ \phi(X) = \sum_k v_kw_k^*Xw_kv_k^* = \sum_k s_k(X) P_k,\]
 for some set of vectors  $ w_k  \in \bb C^p$ and $v_k \in \bb C^q$, where $s_k(X) = \langle w_k, Xw_k \rangle$ and $P_k = v_kv_k^*$. By normalizing the $w_k$'s and $v_k$'s we extract weights $d_k = \|w_k|| \cdot \|v_k\|$, and we may assume the $s_k$ are states and the $P_k$'s are rank one projections.
 
 Thus, $\phi$ is entanglement breaking if and only if $C_{\phi}$ can be written as
 \[ C_{\phi} = \sum_k d_k \big( s_k(E_{i,j}) P_k \big) = \sum d_k Q_k \otimes P_k,\]
 with the $Q_k \in M_p$ density matrices and the $P_k \in M_q$ rank one projections.
 
 Moreover since every density matrix can be written as a sum of rank one projections, we have that $\phi$ is entanglement breaking if and only if
 \[ C_{\phi} = \sum_l t_l R_l \otimes S_l,\]
 where $R_\ell \in M_p$ and $S_\ell \in M_q$ are rank one projections, and $t_\ell$ are positive weights.

Another more recent characterization, given in \cite{JKPP}, is that $\phi$ is entanglement breaking exactly when it factors through $\ell^\infty_k$ via positive maps, for some $k$. More precisely: there are positive maps $\psi : M_p  \to \ell^\infty_k$ and $\gamma : \ell^\infty_k \to M_q$ so that $\phi = \gamma \circ \psi$.

\begin{remark}\label{abelian}
This characterization may be modified slightly so that instead of factoring through a finite-dimensional abelian C$^*$-algebra, any abelian C$^*$-algebra may be used. Simply note that, if $\phi = \gamma \circ \psi$ for $\psi: M_p \to C(X)$ and $\gamma : C(X) \to M_q$, then $\phi \circ \Theta$ is still completely positive for any positive map $\Theta$ on $M_p$. This simply follows from the fact that positive maps into or from an abelian C$^*$-algebra are necessarily completely positive.
\end{remark}
 
 
\section{The Asymptotic Result}

It turns out that Christandl's intuition holds asymptotically in the following sense: the sequence of iterates of a PPT channel $\phi$ on $M_n$ approaches the set of entanglement breaking maps. To prove this we use some very basic results from the theory of abelian semigroups. Since these objects live in a finite dimensional space, convergence is independent of any particular metric.

We first examine the case of an idempotent unital PPT map.

\begin{lemma}
Let $\phi : M_n \to M_n$ be an idempotent unital PPT map. Then the range of $\phi$ is an abelian C*-algebra with respect to the product $a \ast b = \phi(ab)$, for $a,b$ in $\phi(M_n)$.
\end{lemma}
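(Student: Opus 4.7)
The plan is to separate the claim into two parts: that $R := \phi(M_n)$ endowed with $a \ast b := \phi(ab)$ is a C*-algebra, and that this C*-algebra is abelian. The first part is the classical Choi--Effros theorem on unital idempotent CP maps, with the involution and norm inherited from $M_n$; no use is made of PPT. The real content is therefore the abelianness, and this is where PPT must enter.

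For abelianness I would invoke the structural description of unital idempotent CP maps on $M_n$. After a suitable choice of orthonormal basis, one can decompose $\bb C^n = \bigoplus_j \bb C^{m_j} \otimes \bb C^{k_j}$ so that the block-diagonal subalgebra $\bigoplus_j M_{m_j} \otimes M_{k_j}$ is $\phi$-invariant, off-block entries are sent to $0$, and on the $j$-th block $\phi$ acts as $\id_{m_j} \otimes \omega_j$ for some state $\omega_j$ on $M_{k_j}$. In particular $R = \bigoplus_j M_{m_j} \otimes I_{k_j} \cong \bigoplus_j M_{m_j}$, so $R$ is abelian precisely when $m_j = 1$ for every $j$. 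Now $\phi \circ T$ is CP by PPT, hence so is its restriction to the $j$-th block. Taking the transpose in a basis aligned with the block decomposition, so that $T|_{M_{m_j k_j}} = T_{m_j} \otimes T_{k_j}$, this restriction becomes
\[
(\id_{m_j} \otimes \omega_j) \circ (T_{m_j} \otimes T_{k_j}) \;=\; T_{m_j} \otimes \omega_j',
\]
where $\omega_j' := \omega_j \circ T_{k_j}$ is still a state. A short Choi-matrix computation identifies the Choi matrix of $T_{m_j} \otimes \omega_j'$, up to reshuffling of tensor factors, with the swap operator on $\bb C^{m_j} \otimes \bb C^{m_j}$ tensored with a positive matrix built from $\omega_j'$. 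Since the swap fails to be positive as soon as $m_j \geq 2$, CP forces $m_j = 1$ for every $j$, and hence $R \cong \bb C^N$ is abelian.

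The step that calls for the most care is aligning the basis of the structure theorem with the transpose used to define PPT, so that the restriction of $T$ to each block really takes the product form $T_{m_j} \otimes T_{k_j}$; without that, the Choi-matrix argument cannot be pushed through as cleanly. If one prefers to avoid the structure theorem altogether, another avenue would be to show directly that $\phi([a,b]) = 0$ for all $a,b \in R$ by alternating applications of the Kadison--Schwarz inequality to the two UCP maps $\phi$ and $\phi \circ T$, but I would expect that more elementary route to be noticeably messier.
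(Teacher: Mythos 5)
Your first step (Choi--Effros gives the C*-structure on the range) is fine and matches the paper. The gap is in the structural description you invoke for the abelianness step: it is not true that every unital idempotent CP map on $M_n$ is, after a change of basis, of the form ``kill the off-diagonal blocks and act as $\id_{m_j}\otimes\omega_j$ on the $j$-th block.'' That description forces the range to be literally the subalgebra $\bigoplus_j M_{m_j}\otimes I_{k_j}$ (a unitary conjugate of a subalgebra is still a subalgebra), but the range of a unital idempotent CP map need not be a subalgebra of $M_n$ under the original product --- this is exactly why the Choi--Effros product is needed in the first place. A concrete counterexample: on $M_3$ let $P=E_{11}+E_{22}$ and
\begin{equation*}
\phi(X) \;=\; PXP \;+\; E_{31}XE_{13} \;=\;
\begin{pmatrix} x_{11} & x_{12} & 0\\ x_{21} & x_{22} & 0\\ 0 & 0 & x_{11}\end{pmatrix}.
\end{equation*}
This $\phi$ is unital, CP (Kraus form), and idempotent, but its range $\{\,a\oplus a_{11} : a\in M_2\,\}$ is not closed under multiplication ($E_{12}E_{21}=E_{11}$ is not in the range), so it cannot be put in your normal form; its Choi--Effros algebra is $M_2$, realized as a ``twisted'' copy inside $M_3$. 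The structure you quote is the one for conditional expectations onto subalgebras (equivalently, for idempotents that are unital \emph{and} trace preserving, or whose range happens to be a subalgebra via Tomiyama), and the lemma only assumes unitality. Since you invoke this structure before PPT enters, the argument does not get off the ground; and you cannot first argue the range is a subalgebra without essentially re-proving the lemma.

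By contrast, the paper never needs a concrete normal form: it works with the Choi--Effros algebra abstractly, notes that if it were non-abelian it would have a direct summand $M_k$ with $k\ge 2$, composes $\phi$ with the corresponding projection (a CP map, so the composite is still PPT by the closure of PPT maps under one-sided CP composition), and then derives a contradiction because a PPT map cannot be surjective onto $M_k$: applying $\psi\otimes\id_k$ to a positive preimage of the maximally entangled matrix $(E_{ij})$ and partially transposing produces a non-positive matrix. If you want to salvage your approach, you would need either to restrict to the unital trace-preserving case (where your structure theorem is valid) or to replace it by the correct, weaker structure of unital idempotent CP maps, at which point the swap-operator computation you sketch essentially collapses into the paper's surjectivity argument. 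One side remark: the basis-alignment issue you worry about for the transpose is not a real obstruction --- transposes in different orthonormal bases differ by a unitary conjugation, so the PPT property is independent of that choice; the structure theorem is the genuine problem.
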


\begin{proof}
A result of Choi and Effros \cite{CE} implies that the range of $\phi$ is a C*-algebra with respect to the product given above. We must show that this C*-algebra is abelian.

Supposing it is non-abelian: it has a direct summand isomorphic to $M_k$ for some $k\geq2$. So by composing with the associated projection we get, by \cref{composition}, an induced PPT map $\psi$ which is surjective onto $M_k$. In this case, $\psi \otimes \id_k : M_n \otimes M_k \to M_k \otimes M_k$ is also surjective. So there is a positive matrix $(A_{ij}) \in M_n \otimes M_k$ with $\psi \otimes \id_k ((A_{ij})) = (E_{ij})$, however $(T(E_{ij}))$ is not positive, so the composition $T \circ \psi$ is not completely positive, contradicting $\psi$ being PPT. Therefore, we conclude that the range of $\phi$ is abelian.

\end{proof}

\begin{prop}\label{idempotent}
Let $\phi$ be an idempotent unital PPT map on $M_n$. Then $\phi$ is entanglement breaking.
\end{prop}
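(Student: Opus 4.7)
The plan is to use the factorization characterization from \cite{JKPP} (as recorded in Remark~\ref{abelian}): a CP map is entanglement breaking if and only if it factors as $\phi = \gamma \circ \psi$ through an abelian C*-algebra via positive maps $\psi$ and $\gamma$. The goal is to produce such a factorization through the range of $\phi$.

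First I would invoke the preceding lemma: the range $R := \phi(M_n)$ is an abelian C*-algebra under the Choi--Effros product $a \ast b := \phi(ab)$, hence $*$-isomorphic to $\ell^\infty_k$ for some $k$. Fix a $*$-isomorphism $\sigma : (R, \ast) \to \ell^\infty_k$ and let $\iota : R \hookrightarrow M_n$ denote the set-theoretic inclusion. Define $\psi := \sigma \circ \phi : M_n \to \ell^\infty_k$ and $\gamma := \iota \circ \sigma^{-1} : \ell^\infty_k \to M_n$. Since $\phi$ already takes values in $R$, the equality $\gamma \circ \psi = \phi$ holds automatically, and no appeal to idempotency is required at this stage beyond its implicit use in the preceding lemma.

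The substantive step is to verify that $\psi$ and $\gamma$ are positive. This reduces to showing that $\sigma$ is positive with respect to the order on $R$ inherited from $M_n$ and the standard order on $\ell^\infty_k$---equivalently, that the positive cone of $(R, \ast)$ coincides with $R \cap (M_n)_+$. The direction ``$(R, \ast)$-positive implies $M_n$-positive'' is immediate: any $r = a \ast a^* = \phi(aa^*)$ with $a \in R$ is positive in $M_n$ because $\phi$ is CP. The reverse direction---that every $r \in R$ with $r \geq 0$ in $M_n$ is already positive in $(R, \ast)$---is the nontrivial content of the Choi--Effros theorem, which produces not merely an abstract C*-algebra structure on $R$ but a complete order isomorphism between the operator system $R \subseteq M_n$ and the C*-algebra $(R, \ast)$.

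Granted this agreement of orders, both $\psi$ and $\gamma$ are positive maps between $M_n$ and an abelian C*-algebra; since positive maps to or from an abelian C*-algebra are automatically completely positive, the characterization from \cite{JKPP} yields that $\phi$ is entanglement breaking. The main obstacle is precisely the agreement of the two order structures on $R$; this is a standard consequence of Choi--Effros but warrants explicit citation or a short verification in the actual write-up.
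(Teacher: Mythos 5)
Your proposal is correct and follows essentially the same route as the paper: factor $\phi$ through its range, which by the preceding lemma is abelian under the Choi--Effros product, and then invoke the characterization of entanglement breaking maps via positive factorizations through an abelian C*-algebra (Remark~\ref{abelian}). The only difference is that you spell out the details the paper leaves to the references, in particular that the Choi--Effros order on $\phi(M_n)$ agrees with the order inherited from $M_n$, which is indeed part of the Choi--Effros theorem and makes the two factorization maps positive.
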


\begin{proof}
By the previous lemma, $\phi$ factors through a finite dimensional \\
abelian C*-algebra; so by \cref{abelian} we have the result.
\end{proof}

\begin{lemma}
If $\phi$ is a contractive map on $M_n$ endowed with any norm, then there is an idempotent map $\psi$ in the limit points of $(\phi^k)_{k\geq 1}$.
\end{lemma}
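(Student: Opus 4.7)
The plan is to exhibit the idempotent $\psi$ inside the set $L$ of cluster points of the orbit $\{\phi^k : k \geq 1\}$, by realizing $L$ as a compact semigroup under composition and invoking a standard idempotent existence theorem.

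First, since $\phi$ is contractive, $\|\phi^k\| \le \|\phi\|^k \le 1$, so the orbit lies in a bounded subset of the (finite-dimensional) space of linear maps on $M_n$. Hence it is relatively compact, and the set of limit points
\[
L \;=\; \bigcap_{N \ge 1} \overline{\{\phi^k : k \ge N\}}
\]
is non-empty and compact. Finite-dimensionality also removes any dependence on the choice of norm, matching the hypothesis of the lemma.

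Next, I would verify that $L$ is closed under composition. If $a = \lim_i \phi^{k_i}$ and $b = \lim_j \phi^{l_j}$ with $k_i, l_j \to \infty$, then for each fixed $i$, joint continuity of composition gives $\phi^{k_i} b = \lim_j \phi^{k_i + l_j}$, which lies in $L$ because $k_i + l_j \to \infty$; letting $i \to \infty$ and using closedness of $L$ yields $ab \in L$. Thus $L$ is a non-empty compact semigroup under composition.

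To finish, I would apply the Ellis-type fact that every non-empty compact semigroup with jointly continuous product contains an idempotent. A direct proof uses Zorn's lemma to extract a minimal non-empty closed sub-semigroup $T \subseteq L$. For any $t \in T$, the closed sub-semigroup $Tt$ is non-empty, so $Tt = T$ by minimality, providing some $s \in T$ with $st = t$. The set $\{s \in T : st = t\}$ is then a non-empty closed sub-semigroup of $T$, and minimality forces it to equal $T$; in particular $t \cdot t = t$, so $\psi := t \in L$ is the required idempotent. The only even mildly delicate step is the semigroup verification via iterated limits; once that is in place, the existence of an idempotent is classical.
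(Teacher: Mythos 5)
Your argument is correct, and it takes a recognizably different route from the paper's. You work inside the $\omega$-limit set $L=\bigcap_{N}\overline{\{\phi^k:k\ge N\}}$, verify it is a nonempty compact semigroup under composition (the iterated-limit argument you give is fine, since composition is jointly continuous on bounded sets of linear maps in finite dimensions), and then invoke the Ellis--Numakura idempotent lemma, proved via Zorn's lemma applied to minimal nonempty closed subsemigroups. The paper instead takes $S$ to be the closure of the whole orbit $\{\phi^k\}$, uses that $S$ is a compact \emph{abelian} semigroup, forms the explicit minimal ideal $K=\bigcap_{a\in S}aS$ (nonempty by the finite intersection property), and shows $K$ has a multiplicative identity, which is the desired idempotent; commutativity is used in that last verification. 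Your approach buys two things: it needs no commutativity at all, and it locates the idempotent directly in the set of limit points, which is what the lemma literally asserts (the paper's idempotent lies in $S$, and one needs the small extra observation that an idempotent power $\phi^k$ is automatically a limit point since $\phi^{mk}=\phi^k$ for all $m$). The paper's approach buys explicitness: the minimal ideal is written down concretely and no appeal to Zorn's lemma is needed, which is arguably more elementary in this abelian, finite-dimensional setting. Both are standard compact-semigroup arguments and both are complete.
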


\begin{proof}
Let $S$ denote the closure of $\{ \phi^k : k \in \bb{N} \}$. This is compact since $\phi$ is contractive, and it is an abelian semigroup under composition. Let $K = \cap_{a \in S} \ aS$ be the intersection of all singly generated ideals. Then we claim $K$ is a minimal ideal in $S$.

First of all, this set is non-empty by the finite intersection property and by the fact that the product of finitely many ideals is contained in their intersection. It is also clearly an ideal, as an intersection of ideals. Minimality follows from the fact that every ideal contains a singly generated ideal, and of course every singly generated ideal contains $K$.

We will now show that $K$ has a multiplicative identity, thus giving us an idempotent in $S$. Take $k \in K$. Since $K$ is minimal we have $k^2 S = K$, so there is $s \in K$ such that $(sk) k = s k^2 = k$. We claim that $sk$ is the identity in $K$.

Taking any $k' \in K$, again by minimality there is $s' \in S$ such that $s' k = k'$. From this we get that $(sk)k' = (sk)s'k = s'(sk)k = s'k = k'$. So we may take $\psi  = sk$.
\end{proof}

Note that if $\phi$ is trace preserving then so is the idempotent $\psi$. The next lemma tells us exactly how $\phi^k$ approaches the set of EB maps.

\begin{lemma}
If $\phi$ and $\psi$ are as above then
\begin{align*}
\| \phi^k - \phi^k \circ \psi \| \to 0.
\end{align*}
\end{lemma}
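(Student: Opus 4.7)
The plan is to exploit the compactness of the semigroup $S = \overline{\{\phi^k : k \geq 1\}}$ together with the fact, established in the proof of the previous lemma, that $\psi$ is a multiplicative identity for the minimal ideal $K = \bigcap_{a \in S} aS$. Specifically, I will show that every subsequential limit $\eta$ of $(\phi^k)_{k \to \infty}$ lies in $K$, which gives $\eta \circ \psi = \eta$ for free.

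I first reduce the claim to this subsequential statement. If $\|\phi^k - \phi^k \circ \psi\| \not\to 0$, extract a subsequence along which this norm is bounded below by some $\varepsilon > 0$, and then use compactness of $S$ to pass to a further subsequence with $\phi^{k_j} \to \eta \in S$. Continuity of composition yields $\phi^{k_j} \circ \psi \to \eta \circ \psi$, so the conclusion $\eta = \eta \circ \psi$ contradicts the bound.

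The main step is therefore to show that any such limit $\eta$ lies in $K$. Fix $a \in S$. When $a = \phi^m$ for some integer $m$, for all $k_j \geq m$ we have $\phi^{k_j} = a \circ \phi^{k_j - m} \in aS$; and $aS$ is closed as the continuous image of the compact set $S$, so $\eta \in aS$. For a general $a \in S$, approximate $a$ by a sequence $\phi^{m_\ell} \to a$. By what we just did, $\eta \in \phi^{m_\ell} S$ for each $\ell$, so we may write $\eta = \phi^{m_\ell} \circ s_\ell$ with $s_\ell \in S$. Compactness of $S$ then allows us to extract a convergent subsequence $s_{\ell_i} \to s \in S$, and passing to the limit gives $\eta = a \circ s \in aS$. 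Hence $\eta \in \bigcap_{a \in S} aS = K$.

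Since $\psi$ is the identity element of $K$, we immediately obtain $\eta \circ \psi = \eta$ for every subsequential limit $\eta$, which completes the proof. The main obstacle is guaranteeing that \emph{every} limit point (not just the distinguished $\psi$ extracted earlier) is absorbed by the minimal ideal; the small wrinkle is handling an arbitrary $a \in S$ rather than only powers of $\phi$, which is dispatched by the diagonal-style compactness argument applied to the witnesses $s_\ell$.
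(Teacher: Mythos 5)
Your argument is correct, but it is a genuinely different proof from the one in the paper. The paper's proof is spectral: since $\psi$ is a limit of powers of $\phi$, the two maps commute, so $\ran(\psi)$ and $\ran(\id-\psi)$ are invariant for $\phi$; the spectral mapping theorem then forces the spectrum of $\phi$ restricted to $\ran(\id-\psi)$ to lie strictly inside the unit disc, whence $\phi^k-\phi^k\circ\psi=\phi^k(\id-\psi)\to 0$ (in fact at a geometric rate governed by that spectral radius). You instead stay entirely inside the semigroup framework of the preceding lemma: you show that every subsequential limit $\eta$ of the iterates lies in the minimal ideal $K=\bigcap_{a\in S}aS$ — first for $a$ a power of $\phi$, using that $aS$ is compact, then for general $a\in S$ by approximating $a$ by powers and extracting a convergent subsequence of the witnesses $s_\ell$ — so that $\eta\circ\psi=\eta$ because $\psi$ is the identity of $K$ (commutativity of $S$ lets you multiply by $\psi$ on either side), and the norm convergence follows by the standard compactness-plus-contradiction reduction. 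Both arguments are sound. Yours is softer and more portable: it uses no linear spectral theory, only compactness, continuity of composition, and commutativity, so it would work verbatim in any compact abelian topological semigroup, and it yields the stronger structural statement that the whole limit set of $(\phi^k)$ is contained in $K$. The paper's spectral route is shorter given the finite-dimensional setting and implicitly gives a quantitative (exponential) rate of convergence, which your topological argument does not provide.
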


\begin{proof}
Since $\psi$ is a limit of powers of $\phi$, $\phi$ and $\psi$ commute. Hence $\ran(\psi)$ and $\ker(\psi) = \ran(\id - \psi)$ are invariant for $\phi$. It follows from the spectral mapping theorem that $\sigma(\phi |_{\ran(\psi)}) \subseteq \bb{T}$ and $\sigma(\phi|_{\ran(\id - \psi)}) \subseteq \bb{D}$. Since $\phi$ and $\psi$ commute, this implies $\sigma(\phi^n|_{\ran(\id - \psi)}) \subseteq \bb{D}$. Hence $\lim \|\phi^n - \phi^n \circ \psi\| = 0$.
\end{proof}

\begin{thm}
Every unital or trace preserving PPT map $\phi$ is asymptotically entanglement breaking, in the sense that $d(\phi^k, EB) \to 0$, where EB denotes the set of entanglement breaking maps on $M_n$.
\end{thm}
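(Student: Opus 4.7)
The plan is to stitch the three preceding lemmas and Proposition \ref{idempotent} together, using contractivity in an appropriate norm to produce an idempotent limit point, and then using the fact that entanglement breaking maps form a two-sided ideal for composition by CP maps.

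First I would verify that $\phi$ is contractive in a norm to which the previous lemma applies. If $\phi$ is unital (and CP), then $\phi$ is a contraction in operator norm; if $\phi$ is trace preserving, then $\phi$ is a contraction in trace norm. Either way, we may invoke the lemma preceding the theorem to produce an idempotent $\psi$ lying in the closure of $\{\phi^k : k\geq 1\}$. Since the set of PPT maps is defined by a positivity condition on the Choi matrix and hence is closed, $\psi$ is again PPT. Moreover, the properties of being unital (resp.\ trace preserving) also pass to limits, so $\psi$ is an idempotent unital (resp.\ trace preserving) PPT map.

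Next I would establish that $\psi$ is entanglement breaking. In the unital case this is immediate from Proposition \ref{idempotent}. For the trace preserving case I would pass to the adjoint $\psi^*$: the adjoint of a PPT map is PPT (since taking Hilbert--Schmidt adjoints commutes with the transpose at the Choi level), $\psi^*$ is unital whenever $\psi$ is trace preserving, and $\psi^*$ is again idempotent. Proposition \ref{idempotent} therefore applies to $\psi^*$, showing $\psi^*$ is entanglement breaking. Since the entanglement breaking property is self-dual (if $\psi^* = \sum_k v_k w_k^* (\cdot) w_k v_k^*$ has rank one Kraus operators, then so does $\psi$), we conclude that $\psi$ itself is entanglement breaking in this case as well.

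With $\psi$ entanglement breaking, the map $\phi^k \circ \psi$ is entanglement breaking for every $k$, because the set of entanglement breaking maps is closed under composition with CP maps on either side (for example, using the factorization $\psi = \gamma \circ \eta$ through an abelian C$^*$-algebra, the composition $\phi^k \circ \psi = (\phi^k \circ \gamma) \circ \eta$ exhibits the same factorization for $\phi^k \circ \psi$). Consequently
\[
d(\phi^k, \mathrm{EB}) \;\leq\; \|\phi^k - \phi^k \circ \psi\|,
\]
and by the lemma immediately preceding the theorem the right-hand side tends to zero.

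The main obstacle I anticipate is the trace preserving case, since Proposition \ref{idempotent} is stated only for unital maps; the duality argument above is the cleanest way around this, but it requires checking that PPT, idempotence, and entanglement breaking are all preserved under passage to the Hilbert--Schmidt adjoint, and that the contractivity of $\phi$ holds in \emph{some} norm so that the semigroup lemma can be applied. Once these pieces are in place, the theorem follows by combining the three lemmas and the proposition in a few lines.
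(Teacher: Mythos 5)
Your proposal is correct and follows essentially the same route as the paper: produce an idempotent limit point $\psi$ of the powers via the semigroup lemma, conclude $\psi$ is entanglement breaking from Proposition \ref{idempotent}, and combine with the convergence lemma and the fact that CP compositions of entanglement breaking maps are entanglement breaking. The only (harmless) variation is in the trace preserving case: you apply duality at the level of the idempotent $\psi$ (using that adjoints preserve PPT, idempotence, and the entanglement breaking property, together with trace-norm contractivity of $\phi$), whereas the paper simply dualizes the whole unital statement at the end using that the adjoint is an isometry and EB is $*$-symmetric; both arguments are sound.
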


\begin{proof}
In the unital case, we know that the idempotent map $\psi$ from the lemma above is PPT and hence entanglement breaking by \cref{idempotent}. So for every $n$ the map $\phi^k \circ \psi$ is entanglement breaking, and by the previous lemma this implies $d(\phi^k,EB) \to 0$.

To retrieve the trace preserving case, recall that trace preserving maps are precisely the adjoints of unital ones, and the set of entanglement breaking maps is also *-symmetric. So, since the adjoint operation is an isometry we get the result.
\end{proof}

\begin{remark}  The above theorem can also be deduced from the work of Lami and Giovanetti\cite{LG} on {\it asymptotically entanglement-saving channels}.  A channel is called asymptotically entanglement-saving if no limit point of its iterates is entanglement breaking, which is easily seen to be equivalent to the negation of our condition that $\lim_k d( \phi^k, EB) =0$. Combining \cite[Theorem~32.2]{LG}  and \cite[Theorem~12]{LG} shows that no PPT map can be asymptotically entanglement-saving. 
\end{remark}

 \section{Schur Product Maps}
 In this section, we examine a class of maps for which it is possible to prove Christandl's conjecture; these form a one parameter family of maps defined from graphs. Determining for which values of the parameter these maps belong to the various classes of positive maps, completely positive maps, PPT maps, and entanglement breaking maps, leads to interesting spectral questions in combinatorics.  For some of these families of maps we are able to answer these questions exactly, for others we can only give estimates on the parameter. However, our estimates are good enough to show that, for all graphs, the PPT-squared conjecture is true for maps in this family.
 
  Recall that given matrices $A=(a_{i,j}), B= (b_{i,j})$ of the same size, their Schur product is the matrix
  \[ A \circ B := (a_{i,j}b_{i,j}).\]
  Given $A \in M_p$ then we set $S_A:M_p \to M_p$ to be the map $S_A(B) = A \circ B$.
  
  It is well known that $S_A$ is CP if and only if $A \ge 0$.
   
 \begin{prop} Let $P$ be an $n \times n$ matrix. Then $S_P$ is PPT if and only if $P\ge 0$ and $P$ is diagonal.
 \end{prop}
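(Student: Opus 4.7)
The plan is a direct calculation with Choi matrices. One direction is immediate: if $P$ is diagonal with nonnegative entries, then Schur-multiplying by $P$ sends any $B \in M_n$ to a diagonal matrix, which is invariant under transpose; hence $T \circ S_P = S_P$, and since $P \geq 0$ implies $S_P$ is CP, it follows that $S_P$ is PPT.

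For the converse, suppose $S_P$ is PPT. Being CP already forces $P \geq 0$, so the task is to show $P$ is diagonal. I would write out the Choi matrix
\begin{equation*}
C_{T \circ S_P} \;=\; \sum_{i,j} p_{ij}\, E_{ij} \otimes E_{ji},
\end{equation*}
and then show by direct index-tracking that its $((a_1,a_2),(b_1,b_2))$ entry is $p_{a_1,a_2}$ when $b_1 = a_2$ and $b_2 = a_1$, and $0$ otherwise. Reorganizing the rows and columns by unordered pairs then exhibits $C_{T \circ S_P}$ as a block-diagonal matrix whose blocks are the scalars $p_{ii}$ (one for each $i$) together with one $2 \times 2$ block
\begin{equation*}
\begin{pmatrix} 0 & p_{ij} \\ p_{ji} & 0 \end{pmatrix}
\end{equation*}
sitting on the principal submatrix indexed by $e_i \otimes e_j$ and $e_j \otimes e_i$, for each unordered pair $\{i,j\}$ with $i \neq j$. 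Since a self-adjoint $2 \times 2$ matrix with vanishing diagonal is PSD only if it vanishes identically, positivity of $C_{T \circ S_P}$ forces $p_{ij} = 0$ whenever $i \neq j$, so $P$ is diagonal.

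There is no serious obstacle here; the whole argument is bookkeeping on tensor indices. The only point demanding a moment of care is verifying that the pair $\{e_i \otimes e_j,\, e_j \otimes e_i\}$ really does index a principal submatrix with zero diagonal — once the swap structure $E_{ij} \otimes E_{ji}$ is unpacked, the conclusion follows immediately from the fact that principal submatrices of PSD matrices are PSD.
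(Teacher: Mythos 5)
Your proof is correct and follows essentially the same route as the paper's: both directions reduce to computing the Choi matrix of the transposed map and, for the converse, extracting the $2\times 2$ principal submatrix indexed by $e_i\otimes e_j$ and $e_j\otimes e_i$, which has zero diagonal and off-diagonal entry $p_{ij}$, forcing $p_{ij}=0$. Your version just makes the index bookkeeping and the block-diagonal structure by unordered pairs more explicit than the paper does.
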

 \begin{proof} It is readily checked that if $P \ge 0$ and $P$ is diagonal then $S_p$ is PPT.
 
 Assume that $S_P$ is PPT. Then since $S_P$ is CP, $P \ge 0$. Let $T$ denote the transpose map, and assume that $T \circ S_P$ is CP. If $P=(p_{i,j})$, then by Choi's theorem,
 \[ \sum_{i,j} E_{i,j} \otimes p_{i,j} E_{j,i}  \ge 0.\]
 If $i \ne j$, then the $2 \times 2$ block submatrix
 \[ \begin{pmatrix} p_{i,i} E_{i,i} & p_{i,j} E_{j,i} \\ \overline{p_{i,j}} E_{i,j} & p_{j,j} E_{j,j} \end{pmatrix} \ge 0.\]
 But this is possible, only if $p_{i,j} =0$. Hence $P$ is diagonal.
 \end{proof}
   
   Thus, there are no ``interesting" Schur product maps that are PPT.
 
 Let $Tr:M_p \to \bb C$ be the usual trace, and let $tr(B) = \frac{1}{p} Tr(B)$ denote the normalized trace.
 
 We set $\delta:M_p \to M_p$ to be $\delta(X) = tr(X) I_p$.  Note that $\delta \circ \delta = \delta$.
 Note that this CP map is entanglement breaking.
 
 Now let $A= A^*$ be a $p \times p$ matrix of 0's and 1's with the diagonal equal to 0.  The set of $(i,j)$ such that $a_{i,j} \ne 0$ can be thought of as the edge set of a graph $G = (V,E)$ on p vertices, in which case $A$ is the adjacency matrix of the graph.  The Schur product map $S_A$ is idempotent.
 
 We are interested in the one parameter family of maps,
 \[ \gamma_t = \gamma_{t,A} = t \delta +S_A,\] and in determining the following parameters of the graph $G$:
 \begin{itemize}
 \item $t_{pos} = \min \{ t: \gamma_t \text{ is a positive map } \},$
 \item $t_{cp} = \min \{ t: \gamma_t \text{ is CP } \}$,
 \item $t_{ppt} = \min \{ t: \gamma_t \text{ is PPT } \}$,
 \item $t_{eb}= \min \{ t: \gamma_t \text{ is EB } \}.$
 \end{itemize}
 Clearly, we have that
 \[ t_{pos} \le t_{cp} \le t_{ppt} \le t_{eb}.\]
 
 In general, we expect $t_{pos} < t_{cp}$.  In fact for the the case of the complete graph on 2 vertices, i.e., an edge we have that
 \[ \gamma_t \big( \begin{pmatrix} p_{11} & p_{12}\\ p_{21} & p_{22} \end{pmatrix} \big) = \begin{pmatrix} \frac{t(p_{11}+p_{22})}{2} & p_{12} \\p_{21} & \frac{t(p_{11}+p_{22})}{2} \end{pmatrix}.\]
 Using the determinant test and the root mean inequality, one sees that this map is positive for $t=1,$ since for any positive matrix,
 \[ \frac{p_{11}+p_{22}}{2}^2 \ge p_{11}p_{22} \ge p_{12}p_{21}.\]
 However, one readily sees that for $t=1$, the Choi matrix of this map is not positive. Thus, $\gamma_1$ is positive but not CP.
 
 Note that 
 \[ \gamma_t \circ \gamma_t = \gamma_{t^2},\]
 so that the PPT-squared conjecture will hold for this family of maps if and only if $t_{eb} \le t_{ppt}^2,$ which we shall prove below.
 
 
 

 Given an adjacency matrix $A$ we let $\lambda_{min} $ to denote the least (real) eigenvalue of $A$. Since $Tr(A)=0$ this number will always be strictly negative, as long as $A \ne 0$.
 
 \begin{prop} If $A$ is a non-zero $p \times p$ adjacency matrix, then $t_{cp}=t_{ppt}= -p \lambda_{min}$.
 \end{prop}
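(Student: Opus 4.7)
The plan is to compute the Choi matrices of $\gamma_t$ and of $T \circ \gamma_t$ explicitly, diagonalize each, and read off the thresholds.

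First I would note that $\gamma_t(E_{ij}) = \frac{t}{p}\delta_{ij} I_p + a_{ij} E_{ij}$, so
\[ C_{\gamma_t} = \frac{t}{p} I_{p^2} + M, \qquad M := \sum_{i,j} a_{ij}\, E_{ij} \otimes E_{ij}. \]
A direct calculation shows that $M$ annihilates every $e_k \otimes e_l$ with $k \neq l$ and preserves the ``diagonal subspace'' $D := \operatorname{span}\{e_k \otimes e_k : 1 \le k \le p\}$, on which it acts as $A$ under the identification $D \cong \bb{C}^p$. Hence the nonzero spectrum of $M$ equals the nonzero spectrum of $A$, and since $A \neq 0$ has trace zero, $\lambda_{min}(M) = \lambda_{min}(A) < 0$. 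Thus $C_{\gamma_t} \ge 0$ iff $t \ge -p\lambda_{min}$, giving $t_{cp} = -p\lambda_{min}$.

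Next I would do the analogous computation for $T \circ \gamma_t$, whose Choi matrix is
\[ C_{T \circ \gamma_t} = \frac{t}{p} I_{p^2} + N, \qquad N := \sum_{i,j} a_{ij}\, E_{ij} \otimes E_{ji}. \]
Here $N$ annihilates each diagonal vector $e_k \otimes e_k$ (since $a_{kk} = 0$), while for each pair $k < l$ it preserves $\operatorname{span}\{e_k \otimes e_l,\, e_l \otimes e_k\}$ and acts there as the off-diagonal matrix with both off-diagonal entries equal to $a_{kl}$, whose eigenvalues are $\pm a_{kl}$. So as soon as $A$ has any nonzero entry we obtain $\lambda_{min}(N) = -1$, and $C_{T \circ \gamma_t} \ge 0$ iff $t \ge p$.

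Combining the two, $\gamma_t$ is PPT iff $t \ge \max(-p\lambda_{min},\, p)$, so the remaining step is to show that the CP constraint dominates, i.e., $\lambda_{min}(A) \le -1$. I would establish this by Cauchy interlacing: any edge of $G$ furnishes a principal $2 \times 2$ submatrix of $A$ of the form $\left(\begin{smallmatrix} 0 & 1 \\ 1 & 0 \end{smallmatrix}\right)$, whose smallest eigenvalue is $-1$, forcing $\lambda_{min}(A) \le -1$. The calculations themselves are routine linear algebra; the only conceptual point, and the only place where the plan could go astray, is the recognition that the PPT constraint $t \ge p$ is automatically absorbed by the CP constraint $t \ge -p\lambda_{min}$ via this interlacing bound.
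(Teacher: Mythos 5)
Your proposal is correct and follows essentially the same route as the paper: identify the spectrum of $C_{\gamma_t}$ via the diagonal subspace on which it acts as $A$, identify the spectrum of the partially transposed Choi matrix (eigenvalues $0,\pm 1$), and observe that the CP threshold dominates because $\lambda_{min}(A) \le -1$. Your minor variations---diagonalizing $N$ on the two-dimensional subspaces $\operatorname{span}\{e_k \otimes e_l, e_l \otimes e_k\}$ instead of squaring it, and using Cauchy interlacing rather than the Rayleigh quotient of $(e_i - e_j)/\sqrt{2}$ to get $\lambda_{min}(A) \le -1$---are equivalent to the paper's steps.
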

 \begin{proof}
To compute $t_{cp},$ we must determine restrictions imposed by requiring the Choi matrix of the map is positive. This matrix is  $\frac{t}{p} \operatorname{I} \otimes \operatorname{I} + C_{S_A}$ where $C_{S_A}$ is the Choi matrix of the map $S_A$.  For $t_{ppt}$ we also need that $\frac{t}{p} \operatorname{I} \otimes \operatorname{I} + C_{S_A\circ T}$ is positive, where $T$ is the transpose map on $M_p$ and $C_{S_A \circ T}$ is the Choi matrix of $S_A \circ T$. 

As for requiring $\frac{t}{p} \operatorname{I} \otimes \operatorname{I} + C_{S_A}$ to be positive, we find the minimal eigenvalue of $C_{S_A} = \sum_{(i,j) \in E(G)} E_{ij} \otimes E_{ij}$, where $G$ is the associated graph of $A$. Notice that $C_{S_A}$ is identically zero on the space spanned by $e_k \otimes e_l$, with $k \neq l$, and on the span of $e_k \otimes e_k$ it behaves exactly as $A$ acting on $\bb{C}^p$. We have that $-\lambda_{min} \operatorname{I} \otimes \operatorname{I} + C_{S_A \circ T}$ is positive, and it is non-positive for any strictly smaller multiple of $\operatorname{I} \otimes \operatorname{I}$. Thus, $t_{cp}= -p \lambda_{min}$.

For the second case, observe that $C_{S_A \circ T} = \sum_{(i,j) \in E} E_{ji} \otimes E_{ij}$, so that $(C_{S_A \circ T})^2 = \sum_{(i,j) \in E} E_{ii} \otimes E_{jj}$ is a diagonal matrix of only 1's and 0's. In particular, the spectrum of $(C_{S_A \circ T})^2$ must be a subset of $\{0,1\}$, but then $C_{S_A \circ T}$ may only have eigenvalues of -1, 0 and 1. So for $t=p$ we will certainly have that $\gamma_{t,A} \circ T$ is completely positive, and it is minimal exactly when -1 is an eigenvalue of $C_{S_A \circ T}$. In fact, this will always be the case; choose $(k,l)$ such that $a_{k,l} =1$ and notice that $C_{S_A \circ T} (e_k \otimes e_l - e_l \otimes e_k) = e_l \otimes e_k - e_k \otimes e_l$.

Thus,  $t_{ppt} = p \cdot min \{ 1, - \lambda_{min} \}$. It is easily checked that for any non-zero adjacency matrix, $\lambda_{min} \le -1$ so that $t_{ppt} = -p \lambda_{min}$, also.

To verify this last claim, note that if $A_{i,j}=1$ and we set $v= \frac{e_i - e_j}{\sqrt{2}}$ then $v$ is a unit vector and
with $\langle Av,v \rangle = -1$, from which it follows that $\lambda_{min} \le -1$.
\end{proof}


We would now like to fully understand  $t_{eb}$, although this will pose a greater issue as it is rarely clear when a matrix is separable in a tensor product. We present here a natural upper bound for $t_{eb}$ via a simple computation. Recall that we view $A$ as being the adjacency matrix of a graph $G=(V,E)$.

\begin{lemma}
Let $A$ be the adjacency matrix of a graph $G = (V,E)$ on $p$ vertices. Then  $\gamma_{pd} = pd \cdot \delta + S_A$ is entanglement breaking, where $d$ denotes the maximum edge degree in $G$. In particular, $t_{eb} \le pd$.
\end{lemma}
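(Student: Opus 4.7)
The plan is to work on the level of the Choi matrix and exhibit an explicit representation of $C_{\gamma_{pd}}$ as a nonnegative combination of tensor products of rank-one positive semidefinite matrices; by the characterization of entanglement breaking maps recalled in Section~2, this suffices. A direct computation gives
\begin{align*}
C_{\gamma_{pd}} \;=\; d \cdot I_p \otimes I_p \;+\; \sum_{(i,j) \in E(G)} E_{ij} \otimes E_{ij},
\end{align*}
where the right-hand sum is over directed edges.

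First I would handle the off-diagonal contribution $\sum E_{ij}\otimes E_{ij}$ one edge at a time. For each undirected edge $\{i,j\}$, set $v_\omega = e_i + \omega e_j$ as $\omega$ ranges over the fourth roots of unity. Averaging the product rank-ones $(v_\omega v_\omega^*)\otimes(\overline{v_\omega}\,\overline{v_\omega}^{\,*})$ over these four values of $\omega$ and using $\sum_{\omega^4=1} \omega^k = 0$ for $k \in \{1,2,3\}$ kills every cross term and leaves
\begin{align*}
\frac{1}{4}\sum_{\omega^4=1}(v_\omega v_\omega^*)\otimes(\overline{v_\omega}\,\overline{v_\omega}^{\,*}) \;=\; (E_{ii} + E_{jj})^{\otimes 2} + E_{ij}\otimes E_{ij} + E_{ji}\otimes E_{ji}.
\end{align*}
Summing over all undirected edges then produces $C_{S_A}$ together with an accumulated diagonal overhead $\sum_{\{i,j\}}(E_{ii}+E_{jj})^{\otimes 2}$.

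The next step is to show that this accumulated diagonal is dominated by $d \cdot I_p \otimes I_p$. By inspection, its coefficient of $E_{mm}\otimes E_{mm}$ is the degree $d_m$, while its coefficient of $E_{mm}\otimes E_{nn}$ for $m \neq n$ is $1$ if $\{m,n\} \in E(G)$ and $0$ otherwise. Since $d \ge d_m$ for every vertex $m$ by definition of maximum edge degree, and $d \ge 1$ whenever $G$ has at least one edge (the only nontrivial case), the remainder
\begin{align*}
d\cdot I_p\otimes I_p \;-\; \sum_{\{i,j\} \in E(G)}(E_{ii}+E_{jj})^{\otimes 2}
\end{align*}
is a diagonal matrix with nonnegative entries, hence trivially a nonnegative combination of the rank-one product positives $(e_m e_m^*)\otimes(e_n e_n^*)$. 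Combining with the edge-wise decomposition yields the desired expression for $C_{\gamma_{pd}}$.

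The main obstacle is choosing a per-edge ansatz whose diagonal overhead is small enough to be absorbed into $d \cdot I_p \otimes I_p$: a real combination such as $(e_i\pm e_j)(e_i\pm e_j)^*$ tensored with itself already produces $E_{ij}\otimes E_{ij} + E_{ji}\otimes E_{ji}$, but only together with unwanted cross terms like $E_{ij}\otimes E_{ji}$ that inflate the off-diagonal budget; the fourth-root-of-unity averaging, combined with tensoring $v_\omega v_\omega^*$ against its entrywise conjugate, is precisely what cleans these up while keeping the diagonal cost at the level $d_i + d_j$ per edge.
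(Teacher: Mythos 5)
Your proof is correct and is essentially the same as the paper's: your fourth-root-of-unity average $\tfrac14\sum_{\omega^4=1}(v_\omega v_\omega^*)\otimes(\overline{v_\omega}\,\overline{v_\omega}^{\,*})$ is exactly the paper's per-edge identity with the matrices $Q_{1,i,j},\dots,Q_{4,i,j}$, which are precisely $v_\omega v_\omega^*$ for $\omega\in\{1,-1,\mp i\}$ (with $Q_3\otimes Q_4$ playing the role of tensoring against the entrywise conjugate). The only cosmetic difference is that you absorb the diagonal overhead by subtracting it from $d\, I_p\otimes I_p$ (using $d\ge d_m$), whereas the paper adds a separable diagonal matrix to reach the same conclusion.
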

 
\begin{proof} 
 We proceed by considering the Choi matrix $C_\phi$ of $\phi = pd \cdot \delta + S_A$ and showing it is separable. It is easy to see that 
\begin{align*} C_\phi & = d \I_p \otimes \I_p + \sum_{(i,j) \in E} E_{ij} \otimes E_{ij} \\
& = D + \sum_{(i,j) \in E \ \text{and} \ i<j} E_{ii} \otimes E_{jj} + E_{jj} \otimes E_{ii} + E_{ij} \otimes E_{ij} + E_{ji} \otimes E_{ji}, 
\end{align*}
where $D$ is a diagonal matrix consisting of 1's and 0's (hence it is separable). So it suffices to show that matrices of the same form as the summand on the right hand side (where $i$ and $j$ may vary from 1 to $p$) are separable.
  
 We use only the four following positive matrices in $M_p$ to prove this fact:
 \begin{eqnarray*}
 && Q_{1,i,j} = E_{i,i} + E_{j,j} + E_{ij} + E_{ji} \\
 && Q_{2,i,j} = E_{i,i} + E_{j,j} + E_{ij} - E_{ji} \\
 && Q_{3,i,j} = E_{i,i} +E_{j,j} + iE_{ij} - iE_{ji} \\
 && Q_{4,i,j} = E_{i,i} +E_{j,j} - iE_{ij} + iE_{ji},
 \end{eqnarray*}
 where $i<j$ vary from 1 to $p$. A routine computation shows that

 \begin{multline*}
 4\big( (E_{i,i}+E_{j,j}) \otimes (E_{i,i}+E_{j,j})+ E_{ij} \otimes E_{ij} + E_{ji} \otimes E_{ji} \big)
 = \\ Q_{1,i,j} \otimes Q_{1,i,j} + Q_{2,i,j} \otimes Q_{2,i,j} 
 + Q_{3,i,j} \otimes Q_{4,i,j} + Q_{4,i,j} \otimes Q_{3,i,j}.
 \end{multline*}
 Summing over all edges we get that
 \[R= 4 \sum_{(i,j) \in E} (E_{i,i}+E_{j,j}) \otimes (E_{i,i} +E_{j,j}) + 8 \sum_{(i,j) \in E} E_{i,j} \otimes E_{j,i}, \]
 is separable.
 Now in the sum $\sum_{(i,j) \in E} (E_{i,i}+E_{j,j}) \otimes (E_{i,i} +E_{j,j}) $, for $k \ne l$ each term $E_{k,k} \otimes E_{l,l}$ appears at most once, while $E_{i,i} \otimes E_{i,i}$ occurs exactly $2 d_i\le 2 d$ times.
 Since each term $E_{k,k} \otimes E_{l,l}$ is separable, we see that we can add a separable term $Q$ to $R$ so that
 \[R+Q = 8d \big( \sum_{i,j} E_{i,i} \otimes E_{j,j} \big) + 8 \sum_{(i,j) \in E} E_{i,j} \otimes E_{j,i} = 8 C_{\phi},\]
 and it follows that $C_{\phi}$ is separable so that $\phi= \gamma_{pd,A}$ is entanglement breaking. 
  
\end{proof}



 
Before stating the next result note that the Schur product $A \circ B$ of two adjacency matrices is again an adjacency matrix.

\begin{cor}
	If $A$ and $B$ are adjacency matrices and the maps $\gamma_{t_1,A}$ and $\gamma_{t_2,B}$ are PPT, then their composition is entanglement breaking.
\end{cor}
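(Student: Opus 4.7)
The plan is to reduce the composition to a single map within the same family and then invoke the preceding lemma. The first step is to verify the multiplicative identity
\[ \gamma_{t_1, A} \circ \gamma_{t_2, B} \;=\; \gamma_{t_1 t_2,\, A \circ B}. \]
This follows directly from $\gamma_{t, C}(X) = t\, \tr(X)\, \I_p + C \circ X$: because both $A$ and $B$ have zero diagonal, $\tr(B \circ X) = 0$ and $A \circ \I_p = 0$, so the cross terms one would naively pick up in $\gamma_{t_1,A}(\gamma_{t_2,B}(X))$ all vanish and the identity falls out.

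The second step extracts quantitative information from the PPT hypothesis via the preceding proposition, which says $t_{ppt} = -p\lambda_{\min}$. Its proof also records the lower bound $|\lambda_{\min}(C)| \geq 1$ for every nonzero adjacency matrix $C$, obtained by testing against $(e_i - e_j)/\sqrt{2}$ on any edge. Hence the PPT assumption forces $t_1 \geq p|\lambda_{\min}(A)| \geq p$ and $t_2 \geq p|\lambda_{\min}(B)| \geq p$, so in particular $t_1 t_2 \geq p^2$.

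Finally, the preceding lemma guarantees that $\gamma_{pd,\, A \circ B}$ is entanglement breaking, where $d$ is the maximum vertex degree of the graph whose adjacency matrix is $A \circ B$. Since this graph sits on $p$ vertices and has no loops, $d \leq p-1$, so $pd \leq p(p-1) < p^2 \leq t_1 t_2$. Writing
\[ \gamma_{t_1 t_2,\, A \circ B} \;=\; \gamma_{pd,\, A \circ B} + (t_1 t_2 - pd)\, \delta \]
and using that $\delta$ is entanglement breaking (its Choi matrix $\tfrac{1}{p}\sum_i E_{ii} \otimes \I_p$ is manifestly separable) together with the fact that the set of EB maps is closed under positive combinations finishes the argument. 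The degenerate cases $A=0$, $B=0$, or $A \circ B = 0$ are swept up by the same identity, which collapses the composition to $t_1 t_2\, \delta$, manifestly EB whenever $t_1,t_2 \geq 0$. I do not anticipate a serious obstacle; the only step that demands real attention is the composition identity itself, whose validity rests entirely on the tracelessness of the adjacency matrices.
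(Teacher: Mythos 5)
Your proof is correct and follows essentially the same route as the paper: reduce the composition to $\gamma_{t_1 t_2,\,A\circ B}$, use $t_{ppt}=-p\lambda_{\min}\ge p$ to get $t_1t_2\ge p^2$, and compare with the lemma's bound $t_{eb}\le pd\le p(p-1)$. The only difference is that you spell out the composition identity and the upward-closure step $\gamma_{t_1t_2}=\gamma_{pd}+(t_1t_2-pd)\delta$, which the paper leaves implicit.
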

\begin{proof}
	The composition evaluates to $\gamma_{t_1 t_2,A \circ B}$, so if either $A$ or $B$ is zero then the composition is the map
	\begin{align*}
		X \mapsto t_1 t_2 \tr(X).
	\end{align*}	 
	This map is clearly entanglement breaking.
	
	If both are non-zero matrices then $t_1 t_2 \geq p^2$, and this is necessarily greater than $t_{eb}$ for any adjacency matrix of size $p$ since the degree of any vertex cannot exceed $p-1$. The corollary follows.
\end{proof}

Numerically, it is possible to compute $t_{pos}$. Indeed, to check if $\gamma_t$ is positive it is enough to check that it is positive for all rank one positive matrices arising from unit vectors. For such a matrix we have that
$\gamma_t((\alpha_i \overline{\alpha_j})) = \frac{t}{p} \I + S_A((\alpha_i \overline{\alpha_j}))$.  This leads to
\[ t_{pos} = -p \min \{ \lambda_{min}(S_A(\alpha_i \overline{\alpha_j})) : |\alpha_1|^2+ \cdots + |\alpha_p|^2 =1 \}.\]

On the other hand, if we consider $S_A$ as a map from $M_p$ to $M_p$ endowed with its trace norm, i.e., the Schatten one-norm, then the norm $\|S_A\|_1$ is attained on such rank one matrices, and so $t_{pos} \le p \|S_A\|_1$. However, the adjoint of $S_A$ is again the map $S_A$, so that one has $\|S_A\|_1= \|S_A\|,$ where the latter norm is the norm of the linear map $S_A: M_p \to M_p$ and $M_p$ is endowed with its usual operator norm, i.e.,  $\|X\|^2 = \lambda_{max}(X^*X)$. There is a well-known formula for computing the norm of such Schur product maps. See for example \cite[Theorem 8.7]{Pa}. Thus,  $t_{pos} \le p \|S_A\|$ gives an upper bound on this quantity.

Next we turn to some lower bounds on $t_{pos}$ in terms of more familiar graph parameters.

\begin{prop}  Let $A$ be the adjacency matrix of a graph $G=(V,E)$ on $p$ vertices and let $\vartheta= \vartheta(G)$ denote the Lov\'asz theta number of the graph and let $\overline{\vartheta}= \vartheta(\overline{G})$, denote the Lov\'asz theta number of the graph complement of $G$. Then
\[ t_{pos} \ge \max \{ 1, -\lambda_{min}(A), \frac{-p \lambda_{min}(A)}{|E|}, \frac{-p \lambda_{min}(A)}{t_{eb}}, 
\frac{\lambda_{max}(A)}{ \overline{\vartheta} -1}   \}.\]
\end{prop}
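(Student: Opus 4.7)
The key reformulation, implicit in the remarks above, is that $\gamma_t(vv^*) = \frac{t}{p} I + A\circ vv^*$ is positive semidefinite if and only if $t \geq -p\,\lambda_{\min}(A\circ vv^*)$. Hence $\gamma_t$ is a positive map exactly when this inequality holds for every unit vector $v$, so $t_{pos}$ equals the supremum of $-p\,\lambda_{\min}(A\circ vv^*)$ over unit $v$. Each of the five lower bounds is thus obtained by an appropriate choice of $v$, with the exception of the $t_{eb}$ bound, which uses a composition argument.

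For $t_{pos}\geq -\lambda_{\min}(A)$, I would take $v = (1,\ldots,1)/\sqrt{p}$, so that $A\circ vv^* = A/p$; the bound $t_{pos}\geq 1$ then follows from the earlier observation that $-\lambda_{\min}(A)\geq 1$ whenever $A\neq 0$. For $t_{pos}\geq -p\lambda_{\min}(A)/|E|$, let $w$ be a unit real eigenvector of $A$ at $\lambda_{\min}(A)$ and choose a unit vector $v=(\alpha_i)$ together with a test vector $u$ so that $\alpha_i u_i = w_i$: explicitly, take $\alpha_i = \sqrt{|w_i|/\|w\|_1}$ and $u_i = \operatorname{sgn}(w_i)\sqrt{|w_i|\,\|w\|_1}$. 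Then $u^T(A\circ vv^*)u = w^T Aw = \lambda_{\min}(A)$ and $\|u\|^2=\|w\|_1^2$, so the Rayleigh quotient gives $\lambda_{\min}(A\circ vv^*)\leq \lambda_{\min}(A)/\|w\|_1^2$. Cauchy--Schwarz yields $\|w\|_1^2\leq|\operatorname{supp}(w)|$, and the eigenvalue equation forces each vertex in $\operatorname{supp}(w)$ to have a neighbor in $\operatorname{supp}(w)$, so $|\operatorname{supp}(w)|\leq \sum_{i,j}a_{ij}=|E|$.

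For $t_{pos}\geq -p\lambda_{\min}(A)/t_{eb}$ I would use the easy semigroup identity $\gamma_s\circ\gamma_t = \gamma_{st}$ (a direct computation from $A\circ A=A$ together with $S_A\circ\delta = \delta\circ S_A=0$), together with the fact that the composition of any positive map with an EB map is CP. The latter is immediate from \cref{abelian}: writing $\gamma_{t_{eb}} = \eta\circ\psi$ with $\psi,\eta$ positive and factoring through an abelian C$^*$-algebra, both $\gamma_{t_{pos}}\circ\eta$ and $\psi$ are CP since positive maps into or out of abelian algebras are automatically so, hence $\gamma_{t_{pos}\cdot t_{eb}}=\gamma_{t_{pos}}\circ\gamma_{t_{eb}}$ is CP, forcing $t_{pos}\cdot t_{eb}\geq t_{cp}=-p\lambda_{\min}(A)$.

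Finally, for $t_{pos}\geq \lambda_{\max}(A)/(\overline{\vartheta}-1)$, I would combine $t_{pos}\geq -\lambda_{\min}(A)$ with the spectral lower bound $\vartheta(\overline{G})\geq 1+\lambda_{\max}(A)/(-\lambda_{\min}(A))$. This inequality can be established via the primal SDP formulation of $\vartheta(\overline{G})$ (maximize $\operatorname{tr}(JX)$ subject to $X\succeq 0$, $\operatorname{tr}(X)=1$, and $X_{ij}=0$ for non-edges of $G$) by testing against $X = D^{1/2}(I+A/(-\lambda_{\min}(A)))D^{1/2}$, where $D$ is the diagonal matrix of squared entries of the nonnegative Perron eigenvector of $A$; this $X$ is feasible and attains $\operatorname{tr}(JX) = 1+\lambda_{\max}(A)/(-\lambda_{\min}(A))$. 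The main technical step is the $|E|$ bound, where the unit vector $v$ and the test vector $u$ must be coordinated so that their pointwise product is precisely the $\lambda_{\min}$ eigenvector, after which the combinatorial support-counting argument closes the estimate; the remaining bounds are more or less immediate.
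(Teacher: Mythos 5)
Your proposal is correct (with the same implicit assumption $A \neq 0$ that the paper makes, without which $t_{pos}\geq 1$ fails and the $|E|$ term is undefined), but it proves two of the five inequalities by genuinely different means. The paper's proof is uniform: take $r \geq t_{pos}$ and apply the positive map $\gamma_r$ to well-chosen matrices --- $-\lambda_{min}(A)\operatorname{I}+A$ and the all-ones matrix for the first two bounds, the signless Laplacian $A+\sum_i d_i E_{i,i}$ for the $|E|$ bound, and, for the $\overline{\vartheta}$ bound, the optimal witness $K$ in Lov\'asz's dual characterization $\overline{\vartheta}=\min\{\lambda_{max}(H): H=H^*,\ S_{\operatorname{I}+A}(H)=\operatorname{I}+A\}$, using $K\leq\overline{\vartheta}\operatorname{I}$ and $\gamma_r(K)=r\operatorname{I}+A$. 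Your bounds $1$, $-\lambda_{min}(A)$ and $-p\lambda_{min}(A)/t_{eb}$ coincide with the paper's in substance. For the $|E|$ bound you instead coordinate the rank-one input $vv^*$ with a Rayleigh test vector so that $\alpha_i u_i = w_i$, getting the (slightly stronger) bound $-p\lambda_{min}(A)/\|w\|_1^2$ and then closing with the support-counting estimate; this is valid, and like the paper's Laplacian argument it uses the convention that $|E|=\sum_{i,j}a_{i,j}$ counts ordered pairs. For the last bound you do not use positivity of $\gamma_t$ at all beyond $t_{pos}\geq-\lambda_{min}(A)$: you establish the Hoffman-type inequality $\overline{\vartheta}\geq 1+\lambda_{max}(A)/(-\lambda_{min}(A))$ via the primal SDP for $\vartheta(\overline{G})$ tested on $D^{1/2}\big(\operatorname{I}+A/(-\lambda_{min}(A))\big)D^{1/2}$ with $D$ built from the (normalized) Perron eigenvector; the feasibility and value computations check out. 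What this buys is an elementary, self-contained route, and it incidentally shows that the term $\lambda_{max}(A)/(\overline{\vartheta}-1)$ is always dominated by $-\lambda_{min}(A)$, i.e.\ it is redundant in the stated maximum; what it loses is the paper's direct mechanism, where the fifth bound is extracted intrinsically from positivity of $\gamma_r$ applied to the theta witness rather than deduced from a separate spectral inequality.
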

\begin{proof}  Let $r \ge t_{pos},$ so that $\gamma_r$ is a positive map.

The first two inequalities come from applying $\gamma_r$ to the positive matrices $-\lambda_{min}(A) \I +A$ and the $p \times p$ matrix of all 1's.    The third inequality comes from applying $\gamma_r$ to the Laplacian matrix of the graph, $L = A + \sum_i d_i E_{i,i}$, which is positive since it is diagonally dominant.

To see the fourth inequality, note that if $s \ge t_{eb}$, then $\gamma_r \circ \gamma_s = \gamma_{rs}$ is the composition of a positive map and an entanglement breaking map and so is CP. Hence,  $t_{eb} t_{pos} \ge t_{cp}= - p \lambda_{min}(A)$, and the inequality follows.

For the final inequality, we use the fact that \cite[Theorem 3]{Lo},
\[\overline{\vartheta} = \min \{ \lambda_{max}(H): H=H^*, S_{\I+A}(H) = I\I +H \}.\] 
Let $K=K^*$ be the matrix such that $S_{\I +A}(K) = \I+A$ and $\lambda_{max}(K) = \overline{\vartheta}$.
Then we have that $K \le \overline{\vartheta} \I$ and so,  $r \I +A = \gamma_r(K) \le r \overline{\vartheta} \I$. Hence,
$A \le r(\overline{\vartheta} -1) \I$ and the last inequality follows.
\end{proof}


\begin{thebibliography}{99}

\bibitem{BCHW} S. Bauml, M. Christandl, K. Horodecki, and A. Winter, Limitations on Quantum Key Repeaters, arXiv preprint arXiv:1402.5927 (2014).

\bibitem{C} M.D. Choi, Completely positive linear maps on complex matrices, Linear algebra and its applications 10, no. 3 (1975): 285-290.

\bibitem{CE} M.D. Choi and E.G. Effros,  Injectivity and Operator Spaces, {\it J. Functional Anal.} {\bf 24}(1977), 156-209.

\bibitem{CK} D. Chru\'sci\'nski and A. Kossakowski, On the structure of entanglement witnesses and new class of positive indecomposable maps, Springer (2007)

\bibitem{HRS} M. Horodecki, P.W. Shor, M.B. Ruskai, General entanglement breaking channels, Rev. Math. Phys. 15 (2003) 629-641.

\bibitem{JKPP} N. Johnston, D. Kribs, V. Paulsen, and R. Pereira, Minimal and maximal operator spaces and operator systems in entanglement theory. Journal of Functional Analysis 260.8 (2011): 2407-2423.

\bibitem{LG} L. Lami and V. Giovannetti,  Entanglement-Saving Channels, arXiv preprint, arXiv:1505.00461v1 (2015).

\bibitem{Lo} L. Lov\'asz, On the Shannon capacity of a Graph, IEEE Transactions on Information Theory, Vol {\it 25}, No. 1, January 1979, 1-6.

\bibitem{Pa} V.I. Paulsen, Completely Bounded Maps and Operator Algebras,  Cambridge Studies in Advanced Mathematics {\bf 78}, Cambridge University Press (2002).

\bibitem{RJKHW}
Mary Beth Ruskai, Marius Junge, David Kribs, Patrick Hayden, Andreas Winter, Operator structures in quantum information theory, Final Report, Banff International Research Station (2012).



\end{thebibliography}
\end{document}